\def\R{{\mathbb {R}}}
\def\F{{\mathcal{F}}}
\newtheorem{teo}{Theorem}
\newtheorem{lema}{Lemma}
\theoremstyle{remark}
\newtheorem{remark}{Remark}
\begin{document}

\title[Multiple solutions with critical growth]{Multiple
solutions for the $p-$laplace operator with critical growth}

\author[P. De N\'apoli, J. Fern\'andez Bonder \& A. Silva]
{Pablo L. De N\'apoli, Juli\'an Fern\'andez Bonder \& Anal\'{\i}a Silva}

\address{Pablo De N\'apoli\hfill\break\indent
Departamento  de Matem\'atica, FCEyN \hfill\break\indent UBA (1428) Buenos Aires, Argentina.} 
\email{{\tt pdenapo@dm.uba.ar}\hfill\break\indent 
{\it Web-page: {\tt http://mate.dm.uba.ar/\~{}pdenapo}}}

\address{Juli\'an Fern\'andez Bonder\hfill\break\indent
Departamento  de Matem\'atica, FCEyN \hfill\break\indent UBA (1428) Buenos Aires, Argentina.} 
\email{{\tt jfbonder@dm.uba.ar}\hfill\break\indent 
{\it Web-page: {\tt http://mate.dm.uba.ar/\~{}jfbonder}}}

\address{Anal\'ia Silva\hfill\break\indent
Departamento  de Matem\'atica, FCEyN \hfill\break\indent UBA (1428) Buenos Aires, Argentina.} 
\email{{\tt asilva@dm.uba.ar}}

\thanks{Supported by Universidad de Buenos Aires under grant X078 and X837,
by ANPCyT PICT No. 2006-290 and CONICET (Argentina) PIP 5477 and PIP 5478/1438.
J. Fern\'andez Bonder and Pablo De N\'apoli are members of CONICET. Anal\'ia Silva is a fellow of CONICET.}

\keywords{$p-$laplace equations, critical growth, variational methods.
\\
\indent 2000 {\it Mathematics Subject Classification.} 35J60, 35J20}

\begin{abstract}
In this paper we show the existence of at least three nontrivial solutions to the following quasilinear elliptic equation $-\Delta_p u = |u|^{p^*-2}u +
\lambda f(x,u)$ in a smooth bounded domain $\Omega$ of $\R^N$ with homogeneous Dirichlet boundary conditions on $\partial\Omega$, where $p^*=Np/(N-p)$ is the critical Sobolev exponent and $\Delta_p u = \mbox{div}(|\nabla u|^{p-2}\nabla u)$ is the $p-$laplacian. The proof is based on variational arguments and the classical concentration compactness method.
\end{abstract}

\maketitle

\section{Introduction.}

Let us consider the following nonlinear elliptic problem:
\begin{equation}\tag{P}\label{1.1}
\begin{cases}
-\Delta_p u = |u|^{p^*-2}u + \lambda f(x,u) & \mbox{in } \Omega\\
u = 0 & \mbox{on } \partial\Omega,
\end{cases}
\end{equation}
where $\Omega$ is a bounded smooth domain in $\R^N$, $\Delta_p u =
\mbox{div}(|\nabla u|^{p-2}\nabla u)$ is the $p-$laplacian, $1<p<N$,
$p^*=Np/(N-p)$ is the critical exponent in the Sobolev embedding and $\lambda$
is a positive parameter.

Problems like \eqref{1.1} appears naturally in several branches of pure and applied mathematics, such as  the theory of quasiregular and quasiconformal mappings in Riemannian manifolds (see \cite{E, To}), non-Newtonian fluids, reaction diffusion problems, flow through porous media, nonlinear elasticity, glaciology, etc. (see \cite{AD, AEK, ACh, D}).

The purpose of this paper, is to prove the existence of at least three nontrivial solutions for \eqref{1.1} under adequate assumptions on the source term $f$ and the parameter $\lambda$. This result extends an old paper by
Struwe \cite{St}. A related result for the nonlinear boundary condition case can be found in \cite{FB}.

Here, no oddness condition is imposed in $f$ and a positive, a negative and a sign-changing solution are found (For odd 
nonlinearities it is well known that infinitely many solutions can be obtained in many situations, by using the tools of 
critical point theory, as the $\mathbb{Z}_2$-symmetric version of the mountain pass theorem. See for instance \cite{R} or \cite{DJM}). 

The proof of our result relies on the variational principle of Ekeland (see \cite{S}) complemented with the, by now, well known concentration compactness method of P.L. Lions (see \cite{Lions}). 

One of the advantages in using Ekeland's variational principle is that it allows to split the geometry of the problem from the compactness aspect of it. This approach simplifies the one applied by M. Struwe  in \cite{St} for the subcritical case.

The use of the concentration compactness method to deal with the $p-$Laplacian has been used by several authors before. One of the first results in this direction was obtained by J. Garc\'{\i}a Azorero and I. Peral in \cite{GAP}. Here, we borrow some ideas from that work.

For related results with subcritical growth, see the already mentioned \cite{St} and more recently \cite{BL, ZCL}.

Also, for related results with critical growth, but with subcritical power perturbation, see the seminal work of M. Guedda and L. 
Veron \cite{GV} and, more recently, the paper by S. Cingolani and G. Vannella \cite{CV}.

Throughout this work, by (weak) solutions of \eqref{1.1} we understand critical points of the associated energy functional acting on the Sobolev space $W_0^{1,p}(\Omega)$:
\begin{equation}\label{Phi}
\Phi(v)=\frac1p\int_{\Omega} |\nabla v|^p \, dx - \frac{1}{p^*}\int_{\Omega}
|u|^{p^*}\, dx - \lambda \int_{\Omega} F(x,v)\, dx,
\end{equation}
where $F(x,u) = \int_0^u f(x,z)\, dz$.

We will denote
\begin{equation}\label{FyG}
\F_\lambda(v)=\int_{\Omega}\frac{1}{p^*}|v|^{p^*} + \lambda F(x,v)\, dx,
\end{equation}
so the functional $\Phi$ can be rewritten as
$$
\Phi(v) = \frac1p \|v\|_{W^{1,p}(\Omega)}^p - \F_\lambda(v).
$$

%%%%%%%%%%%%%%%%%%%%%%%%%%%%%%%%%%%%%%%%%%%%%%%%%%%%%%%%%%%%%%%%%%
%%%%%%%%%%%%%               Section 2                   %%%%%%%%%%
%%%%%%%%%%%%%%%%%%%%%%%%%%%%%%%%%%%%%%%%%%%%%%%%%%%%%%%%%%%%%%%%%%

\section{Assumptions and statement of the results.}

The precise assumptions on the source terms $f$ are as
follows:

\begin{enumerate}
\item[(F1)] $f:\Omega\times\R\to\R$, is a measurable function with respect to the first argument and continuously differentiable with respect to the second argument for almost every $x\in\Omega$. Moreover, $f(x,0)=0$ for every $x\in\Omega$.

%\item[(F2)] There exist constants $p<q<p^*=Np/(N-p)$, $s>p^*/(p^*-q)$, $t=sq/(2+(q-2)s)>p^*/(p^*-2)$ and functions $a\in L^s(\Omega)$,  $b\in L^t(\Omega)$, such that for $x\in\Omega$, $u,v\in \R$,
%\begin{align*}
%& |f_u(x,u)|\le a(x)|u|^{q-2} + b(x),\\
%& |(f_u(x,u)-f_u(x,v))u|\le (a(x)(|u|^{q-2}+|v|^{q-2})+b(x))|u-v|.
%\end{align*}

\item[(F2)] There exist constants $c_1\in (0,1/(p^*-1))$, $k_2\in(p,p^*)$, $0<c_3<c_4$, such that for any $u\in L^q(\Omega)$ and $p<q<p^*$.
\begin{align*}
c_3\|u\|^q_{L^q(\Omega)}&\leq k_2\int_\Omega F(x,u)\,dx\leq\int_\Omega f(x,u)u\,dx\\
&\leq c_1\int_\Omega f_u(x,u)u^2\,dx\leq c_4\|u\|^q_{L^q(\Omega)}
\end{align*}
\end{enumerate}

\begin{remark}
Observe that this set of hypotheses on the nonlinear term $f$ are weaker than the ones considered by \cite{S}.
\end{remark}

\begin{remark}
We exhibit now two families of examples of nonlinearities that fulfill all of our hypotheses.
\begin{itemize}
\item $f(x,u)=|u|^{q-2}u+|u_+|^{r-2}u_+$, if $r\leq q$.

Hypotheses (F1)--(F2) are clearly satisfied.

\item $f(x,u)=\begin{cases}
  |u|^{q-2}u+|u|^{r-2}u & u\geq0\\
  |u|^{q-2}u+|u|^{r-2}u & u<0\\
  \end {cases}$, if $r\leq q$.

Hypotheses (F1)--(F2) are, again, clearly satisfied.
\end{itemize}
\end{remark}

So the main result of the paper reads:

\begin{teo} Under assumptions {\em (F1)--(F2)}, there
exist $\lambda^*>0$ depending only on $n, p, q$ and the constant $c_3$ in {\em (F2)}, such that for every $\lambda>\lambda^*$, there exists three different, nontrivial, (weak) solutions of problem \eqref{1.1}. Moreover these solutions are, one positive, one negative and the other one has non-constant sign.
\end{teo}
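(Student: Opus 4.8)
The plan is to produce the three solutions by three separate minimization/min-max procedures, in each case working on a suitable closed subset of $W_0^{1,p}(\Omega)$ and using Ekeland's variational principle to extract a Palais--Smale sequence whose energy level lies strictly below the threshold $\frac1N S^{N/p}$ at which the concentration compactness method guarantees compactness. First I would record the mountain-pass--type geometry of $\Phi$: from (F2), the term $\F_\lambda$ grows like a power $q\in(p,p^*)$ plus the critical power, so $\Phi$ is bounded below and coercive on a small ball $\{\|v\|\le\rho\}$ and strictly negative somewhere inside it once $\lambda$ is large; this is where the dependence $\lambda^*=\lambda^*(N,p,q,c_3)$ enters, since one needs $\lambda$ big enough that the $c_3\|u\|_{L^q}^q$ lower bound forces $\inf_{\|v\|\le\rho}\Phi<0$ while simultaneously $\Phi\ge\alpha>0$ on the sphere $\|v\|=\rho$ for the min-max part. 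For the positive solution I would repeat the construction on the cone $\mathcal{C}^+=\{v:\ v\ge 0\}$ (equivalently, truncate $f$ to $f(x,u_+)$), minimize $\Phi$ over $\mathcal{C}^+\cap\{\|v\|\le\rho\}$ to get a first critical point $u_1>0$ with $\Phi(u_1)<0$, and symmetrically get $u_2<0$ on $\mathcal{C}^-$.

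For the third, sign-changing solution the idea is a constrained mountain-pass between $u_1$ and $u_2$: consider paths $\gamma\in C([0,1],W_0^{1,p})$ joining $u_1$ to $u_2$ and set $c=\inf_\gamma\max_{t}\Phi(\gamma(t))$. One shows $c>\max\{\Phi(u_1),\Phi(u_2)\}$ using the positive-energy barrier on $\|v\|=\rho$ (any such path must cross it, or more precisely must leave the union of the two small balls around the half-space minimizers), and — this is the crucial quantitative point — that $c<\Phi(u_1)+\frac1N S^{N/p}$ by a careful choice of path built from the Talenti instanton $U_\varepsilon$ concentrated at an interior point, glued onto $u_1$; the $c_1<1/(p^*-1)$ condition in (F2) is presumably what makes the nonlinear perturbation negligible against the critical term in this energy estimate. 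Then Ekeland's principle on the path space yields a $(PS)_c$ sequence, and I would use the Brezis--Lieb lemma together with Lions' concentration compactness alternative to rule out both vanishing (excluded because the energy is bounded away from $0$) and dichotomy/concentration (excluded because $c<\Phi(u_1)+\frac1N S^{N/p}$ and $\Phi(u_1)<0$, so $c<\frac1N S^{N/p}$), obtaining strong convergence to a critical point $u_3$. Finally I would argue $u_3$ is distinct from $u_1,u_2$ and changes sign, either from the energy inequality $\Phi(u_3)=c>\Phi(u_i)$ or by testing the equation with $u_3^\pm$ to see both parts are nontrivial.

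The main obstacle I expect is the strict energy estimate $c<\Phi(u_1)+\frac1N S^{N/p}$: one must insert the concentrating profile $U_\varepsilon$ into a path based at the first solution $u_1$ (not at $0$), estimate $\Phi(u_1+tU_\varepsilon)$ uniformly in $t$, and control the cross terms $\int|\nabla u_1|^{p-2}\nabla u_1\cdot\nabla U_\varepsilon$, $\int|u_1+tU_\varepsilon|^{p^*}$ and the $F$-terms as $\varepsilon\to 0$; for $p\ne 2$ the absence of an exact binomial expansion for $|\nabla(u_1+tU_\varepsilon)|^p$ makes these estimates delicate, and this is exactly the place where ideas are borrowed from García Azorero--Peral \cite{GAP}. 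A secondary technical point is checking that the half-space-constrained minimizers $u_1,u_2$ are genuine critical points of the full functional $\Phi$ (not just of the constrained problem), which follows from a maximum-principle / Vázquez strong maximum principle argument showing the minimizer lies in the interior of the cone, together with the sign structure of $f$ coming from the examples/hypothesis (F2).
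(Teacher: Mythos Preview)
Your outline diverges from the paper's argument in a substantial way, and the divergence contains a real gap.

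\textbf{The gap.} Your plan for the third solution is a mountain pass along paths joining the positive minimizer $u_1$ to the negative minimizer $u_2$, and you propose to conclude that the resulting critical point $u_3$ is sign-changing either from the strict energy inequality $\Phi(u_3)=c>\Phi(u_i)$ or by testing with $u_3^{\pm}$. Neither works. The inequality $c>\Phi(u_1),\Phi(u_2)$ only separates $u_3$ from those two particular solutions; it does not rule out that $u_3$ is some \emph{other} positive (or negative) solution sitting at a higher energy level, and for a general nonlinearity satisfying (F1)--(F2) such solutions may well exist. Testing the equation with $u_3^{\pm}$ yields $\int|\nabla u_3^{\pm}|^p=\int|u_3^{\pm}|^{p^*}+\lambda\int f(x,u_3)u_3^{\pm}$, which is perfectly consistent with one of the two parts vanishing; it gives no contradiction. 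A bare mountain pass between a positive and a negative solution simply does not force sign change without additional structure (invariant cones for a flow, a nodal Nehari set, etc.). This is precisely the obstacle the paper's machinery is designed to avoid.

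\textbf{What the paper does instead.} The paper never runs a min--max. It introduces Nehari-type constraint sets
\[
M_1=\Bigl\{u:\ u_+\not\equiv0,\ \int|\nabla u_+|^p-|u_+|^{p^*}=\lambda\int f(x,u)u_+\Bigr\},
\]
the symmetric $M_2$, and the \emph{sign-changing Nehari set} $M_3=M_1\cap M_2$; then $K_1=\{u\in M_1:u\ge0\}$, $K_2=\{u\in M_2:u\le0\}$, $K_3=M_3$. Every element of $K_3$ changes sign by construction (both $u_+$ and $u_-$ are bounded away from zero in $W^{1,p}_0$, Lemma~\ref{lema2}). One then \emph{minimizes} $\Phi$ over each $K_i$, uses Ekeland to produce a constrained Palais--Smale sequence, and shows (Lemma~\ref{lema4} on the tangent splitting $T_uW_0^{1,p}=T_uM_i\oplus\mathrm{span}\{u_+,u_-\}$, plus Lemma~\ref{lema5}) that this sequence is Palais--Smale for the \emph{unrestricted} functional. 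The crucial energy bound $c_i<\tfrac1N S_p^{N/p}$ is obtained with no Talenti instantons at all: by Lemma~\ref{tlambda}, for any fixed test functions $w_0>0$, $w_1<0$ with disjoint supports there exist scalings $t_\lambda,\bar t_\lambda,\underbar t_\lambda\to0$ as $\lambda\to\infty$ placing $t_\lambda w_0$, etc., on the corresponding $K_i$, whence $c_i\le\Phi(t_\lambda w_0)\le\tfrac1p t_\lambda^p\|\nabla w_0\|_p^p\to0$. This is where $\lambda^*=\lambda^*(N,p,q,c_3)$ comes from, and it replaces entirely the delicate $p\neq2$ cross-term estimates you anticipated as the main obstacle.
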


\section{Proof of Theorem 1.}

The proof uses the same approach as in \cite{St}. That is, we will construct three disjoint sets $K_i\ne\emptyset$ not containing $0$ such that $\Phi$ has a critical point in $K_i$. These sets will be subsets of $C^{1}-$manifolds $M_i\subset W^{1,p}(\Omega)$ that will be constructed by imposing a sign restriction and a normalizing condition.

In fact, let
\begin{align*}
& M_1 = \left\{u \in W^{1,p}_0(\Omega)\colon \int_{\Omega} u_+>0 \mbox{ and }
\int_{\Omega}|\nabla u_+|^{p}-|u_+|^{p^*}dx=\int_{\Omega}\lambda f(x,u) u_+
dx\right\},\\
& M_2 = \left\{u \in W^{1,p}_0(\Omega)\colon \int_\Omega u_->0 \mbox{ and }
\int_\Omega|\nabla u_-|^{p}-|u_-|^{p^*}dx=\int_\Omega\lambda f(x,u) u_- dx\right\},\\
& M_3 = M_1\cap M_2.
\end{align*}
where $u_+ = \max\{u,0\}$, $u_-=\max\{-u,0\}$ are the positive and negative parts of $u$, and $\langle\cdot,\cdot\rangle$ is the duality pairing of $W^{1,p}(\Omega)$.

Finally we define
\begin{align*}
& K_1 = \{ u\in M_1\ |\ u\ge 0\},\\
& K_2 = \{ u\in M_2\ |\ u\le 0\},\\
& K_3 = M_3.
\end{align*}

First, we need a Lemma to show that these sets are nonempty and, moreover, give some properties that will be useful in the proof of the result.

\begin{lema}\label{tlambda}
For every $w_0\in W^{1,p}_0(\Omega)$, $w_0>0$ ($w_0<0$), there exists $t_\lambda>0$ such that $t_\lambda w_0\in M_1 (\in M_2)$. Moreover, $\lim_{\lambda\to\infty} t_\lambda = 0$.

As a consequence, given $w_0, w_1\in W^{1,p}_0(\Omega)$, $w_0>0$, $w_1<0$, with disjoint supports, there exists $\bar t_\lambda, \underbar t_\lambda>0$ such that $\bar t_\lambda w_0 + \underbar t_\lambda w_1 \in M_3$. Moreover $\bar t_\lambda, \underbar t_\lambda \to 0$ as $\lambda\to\infty$.
\end{lema}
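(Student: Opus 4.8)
The plan is to turn membership in $M_1$ (resp. $M_2$) into a one–variable equation and solve it by the intermediate value theorem. Fix $w_0\in W^{1,p}_0(\Omega)$ with $w_0>0$ and, for $t>0$, look at $u=tw_0$; then $u_+=tw_0$ and $u_-\equiv 0$, so the first condition in the definition of $M_1$ holds automatically and $tw_0\in M_1$ is equivalent to $g(t)=0$, where
$$
g(t):=t^p\!\int_\Omega|\nabla w_0|^p\,dx-t^{p^*}\!\int_\Omega|w_0|^{p^*}\,dx-\lambda\!\int_\Omega f(x,tw_0)(tw_0)\,dx .
$$
Abbreviate $A=\int_\Omega|\nabla w_0|^p\,dx$, $B=\int_\Omega|w_0|^{p^*}\,dx$ and, for a fixed $q\in(p,p^*)$, $C=\|w_0\|_{L^q(\Omega)}^q$; all three are finite and positive by the Sobolev embedding.

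I would first record that, by (F1) together with the growth restriction implicit in (F2) (applying the Lebesgue differentiation theorem to the integral bounds in (F2) forces a pointwise subcritical bound on $f$), the map $t\mapsto\int_\Omega f(x,tw_0)(tw_0)\,dx$ is continuous, so $g$ is continuous on $[0,\infty)$; moreover (F2) gives $c_3t^qC\le\int_\Omega f(x,tw_0)(tw_0)\,dx\le c_4t^qC$ for $t>0$. Using the upper bound and $q>p$ one gets $g(t)\ge t^p\big(\tfrac A2-\lambda c_4Ct^{q-p}\big)>0$ for all sufficiently small $t>0$, while using $\int_\Omega f(x,tw_0)(tw_0)\,dx\ge 0$ one gets $g(t)\le t^pA-t^{p^*}B<0$ for $t>(A/B)^{1/(p^*-p)}$. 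Hence there is $t_\lambda>0$ with $g(t_\lambda)=0$, i.e. $t_\lambda w_0\in M_1$; the case $w_0<0$, $t_\lambda w_0\in M_2$, is identical with $u_-$ in place of $u_+$.

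For the limit I would plug the lower bound of (F2) into $g(t_\lambda)=0$:
$$
t_\lambda^pA\ge t_\lambda^pA-t_\lambda^{p^*}B=\lambda\!\int_\Omega f(x,t_\lambda w_0)(t_\lambda w_0)\,dx\ge\lambda c_3t_\lambda^qC ,
$$
whence $t_\lambda^{q-p}\le A/(\lambda c_3C)\to 0$, and since $q>p$ this gives $t_\lambda\to 0$ as $\lambda\to\infty$ (no matter which root $t_\lambda$ was selected).

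Finally, for the assertion about $M_3$: take $w_0>0$, $w_1<0$ with disjoint supports and set $u=\bar tw_0+\underline tw_1$ with $\bar t,\underline t>0$. Because the supports are disjoint, $u_+=\bar tw_0$ and $u_-=-\underline tw_1$ in $W^{1,p}_0(\Omega)$, and since $f(x,0)=0$ we have $\int_\Omega f(x,u)u_+\,dx=\int_\Omega f(x,\bar tw_0)(\bar tw_0)\,dx$ and $\int_\Omega f(x,u)u_-\,dx=\int_\Omega f(x,\underline tw_1)(\underline tw_1)\,dx$. Thus the equation defining $M_1$ for $u$ depends only on $\bar t$ and coincides with the one solved in the first part for $w_0$, and the equation defining $M_2$ for $u$ depends only on $\underline t$; choosing $\bar t_\lambda$ and $\underline t_\lambda$ accordingly puts $u=\bar t_\lambda w_0+\underline t_\lambda w_1$ in $M_1\cap M_2=M_3$, and $\bar t_\lambda,\underline t_\lambda\to 0$ by the decay just proved. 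The only points needing some care are the continuity of $t\mapsto\int_\Omega f(x,tw_0)(tw_0)\,dx$ and the bookkeeping of positive and negative parts on disjoint supports; neither is a genuine obstacle, so this lemma is essentially routine.
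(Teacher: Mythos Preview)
Your argument is correct and follows essentially the same route as the paper: reduce membership in $M_1$ to a one-variable equation in $t$, use the two-sided bounds of (F2) together with $p<q<p^*$ to obtain opposite signs for small and large $t$, apply the intermediate value theorem, and then use the lower bound $\int_\Omega f(x,tw_0)(tw_0)\,dx\ge c_3 t^qC$ to extract $t_\lambda^{q-p}\le A/(\lambda c_3 C)$. Your treatment of the decay is in fact marginally tidier than the paper's (you bound \emph{every} root, not just exhibit one below $t_1$), and you spell out the disjoint-support decoupling for $M_3$ that the paper leaves implicit; otherwise the two proofs coincide.
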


\begin{proof}
We prove the lemma for $M_1$, the other cases being similar.

For $w\in W^{1,p}_0(\Omega)$, $w \geq 0$, we consider the functional
$$
\varphi_1(w)= \int_{\Omega} |\nabla w|^p -|w|^{p^*} - \lambda f(x,w)w \, dx.
$$
Given $w_0>0$, in order to prove the lemma, we must show that $\varphi_1(t_\lambda w_0)=0$ for some $t_\lambda>0$. Using hypothesis (H3), we have that:
$$
\varphi_1(tw_0) \geq A t^p - B t^{p^*} - \lambda c_4 C t^q
$$
and
$$
\varphi_1(tw_0) \leq A t^p - B t^{p^*} - \lambda c_3 C t^q,
$$
where the coefficients $A$, $B$ and $C$ are given by:
$$
A = \int_{\Omega} |\nabla w_0|^p \,dx, \quad B= \int_{\Omega} |w_0|^{p^*} \,dx,
\quad C = \int_{\Omega} |w_0|^q \, dx.
$$
Since $p<q<p^*$ it follows that $\varphi_1(tw_0)$ is positive for $t$ small enough, and negative for $t$ big enough. Hence, by Bolzano's theorem, there exists some $t=t_\lambda$ such that $\varphi_1(t\lambda u)=0$. (This
$t_\lambda$ needs not to be unique, but this does not matter for our purposes).

In order to give an upper bound for $t_\lambda$, it is enough to find some $t_1$, such that $\varphi_1(t_1 w_0)<0$. We observe that:
$$
\varphi_1(t w_0) \leq A t^p -  \lambda c_3 C t^q.
$$
so it is enough to choose $t_1$ such that $A t_1^p - \lambda c_3 C t_1^q=0$, i.e.,
$$
t_1 = \left(\frac{A}{c_3 \lambda C}\right)^{1/(q-p)}.
$$
Hence, again by Bolzano's theorem, we can choose $t_\lambda \in [0,t_1]$, which
implies that $t_\lambda \to 0$ as $\lambda \to +\infty$.
\end{proof}

For the proof of the Theorem, we need also the following Lemmas.

\begin{lema}\label{lema1}
There exist $c_j>0$ such that, for every $u\in K_i$, $i=1,2,3$,
$$
\int_\Omega |\nabla u|^{p}\, dx \leq C_1 \left(\lambda \int_\Omega f(x,u)u\, dx
+ \int_\Omega |u|^{p^*}\, dx\right) \leq C_2 \Phi(u) \leq C_3 \left(\int_\Omega
|\nabla u|^{p}\, dx\right).
$$
\end{lema}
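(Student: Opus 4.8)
The plan is to establish the chain of inequalities by exploiting the defining relation of the manifolds $M_i$ together with hypothesis (F2). First I would observe that for $u \in K_i$ the sign restriction forces $u = u_+$ (on $K_1$), $u = -u_-$ (on $K_2$), or the supports of $u_+$ and $u_-$ to carry the two separate balance conditions (on $K_3$); in every case, adding the relevant constraint(s) that define $M_i$ gives the identity
$$
\int_\Omega |\nabla u|^p\, dx = \int_\Omega |u|^{p^*}\, dx + \lambda\int_\Omega f(x,u)u\, dx.
$$
This identity immediately yields the first inequality with $C_1 = 1$ (in fact an equality), once one checks that the right-hand side is nonnegative — which follows from (F2), since $\int_\Omega f(x,u)u\,dx \ge c_3\|u\|_{L^q}^q \ge 0$.

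For the middle inequality $C_1(\lambda\int f(x,u)u + \int|u|^{p^*}) \le C_2\Phi(u)$, I would write $\Phi(u)$ using the identity above to eliminate the gradient term:
$$
\Phi(u) = \Big(\frac1p - \frac1{p^*}\Big)\int_\Omega |u|^{p^*}\,dx + \lambda\int_\Omega\Big(\frac1p f(x,u)u - F(x,u)\Big)\,dx + \Big(\tfrac1p-\tfrac1p\Big)\!\cdot\!0,
$$
wait — more carefully, $\Phi(u) = \frac1p\int|\nabla u|^p - \frac1{p^*}\int|u|^{p^*} - \lambda\int F = (\frac1p-\frac1{p^*})\int|u|^{p^*} + \lambda\int(\frac1p f(x,u)u - F(x,u))\,dx$. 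By (F2), $\int F\,dx \le \frac1{k_2}\int f(x,u)u\,dx$ with $k_2 > p$, so $\frac1p\int f(x,u)u - \int F \ge (\frac1p - \frac1{k_2})\int f(x,u)u\,dx \ge 0$, and both terms in $\Phi(u)$ are controlled below by a positive multiple of $\int|u|^{p^*} + \lambda\int f(x,u)u$. Taking $C_2 = 1/\min\{\frac1p-\frac1{p^*},\ \lambda(\frac1p-\frac1{k_2})\}$ (or rather, absorbing the $\lambda$ appropriately so the constant stays $\lambda$-independent for $\lambda$ bounded below) gives the bound; one should note both terms have matching coefficients so the estimate is clean.

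For the last inequality $C_2\Phi(u) \le C_3\int|\nabla u|^p\,dx$, I would again use $\Phi(u) = (\frac1p-\frac1{p^*})\int|u|^{p^*} + \lambda\int(\frac1p f(x,u)u - F)$, bound $\int(\frac1p f u - F) \le \frac1p\int f(x,u)u\,dx$ (dropping the nonnegative $-\int F$ is the wrong direction, so instead use $F \ge \frac{c_3}{k_2}\|u\|_{L^q}^q \ge 0$ to get $\frac1p fu - F \le \frac1p fu$), hence $\Phi(u) \le \frac1{p^*}\int|u|^{p^*} + \frac{\lambda}{p}\int f(x,u)u \le \frac1p\big(\int|u|^{p^*} + \lambda\int f(x,u)u\big) = \frac1p\int|\nabla u|^p\,dx$, using the identity in reverse. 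I expect the main obstacle to be purely bookkeeping: keeping track of which constants may depend on $\lambda$ and ensuring (as the statement implicitly requires) that the final constants can be taken uniform for $\lambda \ge \lambda^*$, and handling the $K_3$ case where one must sum the two separate constraints on $u_+$ and $u_-$ and recombine, using that $\int f(x,u)u_+ + \int f(x,u)u_- = \int f(x,u)u$ by disjointness of supports is false in general — rather one works with $f(x,u_+)u_+$ and $f(x,-u_-)(-u_-)$ separately and invokes (F2) on each piece. The structural reason everything works is that all three nonlinear terms ($\int|\nabla u|^p$, $\int|u|^{p^*}$, $\lambda\int f(x,u)u$) are comparable on the Nehari-type manifolds, so $\Phi$ restricted to $M_i$ is equivalent to the $W^{1,p}_0$ norm to the $p$-th power.
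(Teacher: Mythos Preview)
Your argument is correct and follows essentially the same route as the paper: both proofs use the Nehari-type identity $\int_\Omega|\nabla u|^p = \int_\Omega|u|^{p^*} + \lambda\int_\Omega f(x,u)u$ coming from membership in $K_i$, together with the bound $\int F \le \frac{1}{k_2}\int f(x,u)u$ from (F2), and elementary rearrangements. Your substitution $\Phi(u) = (\tfrac1p-\tfrac1{p^*})\int|u|^{p^*} + \lambda\int(\tfrac1p f u - F)$ in fact yields the middle inequality more cleanly than the paper, which only records $\Phi(u)\ge (\tfrac1p-\tfrac1{k_2})\lambda\int f(x,u)u$ and leaves the $\int|u|^{p^*}$ contribution implicit; the concerns you flag about combining the two constraints on $K_3$ and about signs are exactly the points the paper glosses over.
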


\begin{proof}
As $u\in K_i$, we have that
$$
\int_\Omega|\nabla u|^{p}dx=\int_\Omega\lambda f(x,u)u+|u|^{p^*} dx.
$$
This proves the first inequality.

Now, by (F3)
$$
\int_\Omega F(x,u) dx\leq\frac{1}{c_2}\int_\Omega f(x,u)u dx.
$$
Furthermore,
\begin{align*}
\left|\lambda\int_\Omega F(x,u) dx \right|=\lambda\int_\Omega F(x,u)
dx &\leq\frac{1}{c_2}\int_\Omega\lambda f(x,u)u
dx\\
&=\frac{1}{c_2}\left(\int_\Omega|\nabla u|^{p}-|u|^{p^*} dx\right),
\end{align*}
so,
\begin{equation}\label{cota.F}
-\lambda\int_\Omega F(x,u) dx\leq\frac{1}{c_2}\left(\int_\Omega|\nabla
u|^{p}-|u|^{p^*} dx\right).
\end{equation}

By \eqref{cota.F}, we have:
\begin{align*}
\Phi(u) &= \int_{\Omega} \frac{1}{p} |\nabla u|^{p} - \frac{1}{p^*} |u|^{p^*} -
\lambda F(x,u)\, dx \\
&\leq \int_{\Omega} \frac{1}{p} |\nabla u|^{p} - \frac{1}{p^*} |u|^{p^*}\, dx +
\frac{1}{c_2} \left(\int_\Omega |\nabla u|^{p} - |u|^{p^*}\, dx\right)\\
&\leq \left(\frac{1}{c_2} + \frac{1}{p}\right) \int_\Omega |\nabla u|^{p}\, dx.
\end{align*}

This proves the third inequality.

To prove the middle inequality we proceed as follows:
\begin{align*}
\Phi(u)=&\int_{\Omega}\frac{1}{p}|\nabla u|^{p}-\frac{1}{p^*}|u|^{p^*}-\lambda
F(x,u)dx\geq\int_{\Omega}\frac{1}{p}(|\nabla u|^{p}-|u|^{p^*})-\lambda F(x,u)dx\\
\geq&\frac{1}{p}\int_\Omega\lambda f(x,u)u dx-\lambda\int_\Omega F(x,u)
dx\geq(\frac{1}{p}-\frac{1}{c_2})\lambda\int_\Omega f(x,u) dx.
\end{align*}

This finishes the proof.
\end{proof}

\begin{lema}\label{lema2}
There exists $c>0$ such that
\begin{align*}
\|\nabla u_+\|_{L^{^p}(\Omega)} & \geq c\quad\forall\,u\in K_1,\\
\|\nabla u_-\|_{L^{^p}(\Omega)} & \geq c\quad\forall\,u\,\in K_2,\\
\|\nabla u_+\|_{L^{^p}(\Omega)}\, \mbox{,}\,\|\nabla u_-\|_{L^{^p}(\Omega)} &
\geq c\quad\forall\,u\in K_3.
\end{align*}
\end{lema}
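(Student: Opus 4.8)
The plan is to derive a uniform lower bound on $\|\nabla u_+\|_{L^p(\Omega)}$ for $u \in K_1$ (and $K_3$), the argument for $u_-$ and $K_2$ being symmetric. The starting point is the defining relation of $M_1$: for $u \in K_1$ we have $u = u_+ \ge 0$, so
$$
\int_\Omega |\nabla u_+|^p\, dx = \int_\Omega |u_+|^{p^*}\, dx + \lambda \int_\Omega f(x,u_+) u_+\, dx.
$$
The idea is that the right-hand side is controlled by superlinear powers of $\|\nabla u_+\|_{L^p}$, so this equality cannot hold unless $\|\nabla u_+\|_{L^p}$ is bounded away from zero.

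First I would estimate the two terms on the right. By the Sobolev inequality $\|u_+\|_{L^{p^*}(\Omega)} \le S^{-1/p}\|\nabla u_+\|_{L^p(\Omega)}$, so $\int_\Omega |u_+|^{p^*}\,dx \le S^{-p^*/p}\|\nabla u_+\|_{L^p(\Omega)}^{p^*}$. For the source term, (F2) gives $\int_\Omega f(x,u_+)u_+\, dx \le c_4 \|u_+\|_{L^q(\Omega)}^q$, and since $p < q < p^*$, the continuous embedding $W^{1,p}_0(\Omega) \hookrightarrow L^q(\Omega)$ yields $\|u_+\|_{L^q(\Omega)}^q \le C_q \|\nabla u_+\|_{L^p(\Omega)}^q$. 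Writing $X := \|\nabla u_+\|_{L^p(\Omega)} > 0$ (which is positive since $\int_\Omega u_+ > 0$ on $M_1$), we obtain
$$
X^p \le S^{-p^*/p} X^{p^*} + \lambda c_4 C_q X^q.
$$
Dividing by $X^p$ gives $1 \le S^{-p^*/p} X^{p^*-p} + \lambda c_4 C_q X^{q-p}$, and since $p^*-p>0$ and $q-p>0$, the right-hand side tends to $0$ as $X \to 0^+$. Hence there is a constant $c>0$, depending only on $N,p,q,c_4$ and $\lambda$ (equivalently on the embedding constants), such that $X \ge c$; this is exactly the claimed bound for $K_1$, and the same computation applied to $u_-$ handles $K_2$. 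For $u \in K_3 = M_1 \cap M_2$ both defining relations hold simultaneously, so both lower bounds follow at once.

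The only point that needs a little care — and the natural candidate for the main obstacle — is justifying that on $M_1$ one genuinely has $\|\nabla u_+\|_{L^p(\Omega)} > 0$, i.e. that $u_+$ is not identically zero; this is immediate from the condition $\int_\Omega u_+ > 0$ in the definition of $M_1$, together with $u_+ \ge 0$. A secondary subtlety is whether the constant $c$ should be allowed to depend on $\lambda$: inspecting the inequality $1 \le S^{-p^*/p} X^{p^*-p} + \lambda c_4 C_q X^{q-p}$, a larger $\lambda$ only forces a \emph{smaller} admissible $X$ through the second term, but the first term alone already gives $X \ge (S^{-p^*/p})^{-1/(p^*-p)}$ is \emph{not} valid — rather one must keep both terms, so $c = c(\lambda)$ in general; since in Theorem 1 the relevant regime is $\lambda > \lambda^*$ fixed, this is harmless. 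I would state the lemma with $c$ depending on the data including $\lambda$, which is all that is used in the sequel.
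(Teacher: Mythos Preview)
Your argument is correct and follows essentially the same route as the paper: use the defining relation of $M_i$ to write $\|\nabla u_\pm\|_{L^p}^p$ as $\lambda\int_\Omega f(x,u)u_\pm + \|u_\pm\|_{L^{p^*}}^{p^*}$, bound the right-hand side via (F2) and the Sobolev embedding by $C_1\|\nabla u_\pm\|_{L^p}^q + C_2\|\nabla u_\pm\|_{L^p}^{p^*}$, and conclude from $p<q<p^*$. The only cosmetic point is that for $u\in K_3$ the $M_1$ relation involves $f(x,u)u_+$ rather than $f(x,u_+)u_+$, but these coincide pointwise since $u=u_+$ on $\{u_+>0\}$ and the integrand vanishes elsewhere; the paper glosses over this in the same way.
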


\begin{proof}
By the definition of $K_i$,by (F3) and the Poincar\'e inequality we have that
\begin{align*}
\|\nabla u_\pm\|^{p}_{L^{^p}(\Omega)}=&\int_\Omega\lambda f(x,u)u_\pm+|u_\pm|^{p^*} dx\leq C\| u_\pm\|^{q}_{L^{^q}(\Omega)}+\|
u_\pm\|^{p^*}_{L^{p^*}(\Omega)}\\
\leq &c_1\| \nabla u_\pm\|^{q}_{L^{^p}(\Omega)}+ c_2\| \nabla
u_\pm\|^{p^*}_{L^{^p}(\Omega)}.
\end{align*}
As $p<q<p^*$, this finishes the proof.
\end{proof}

\begin{lema}\label{lema3}
There exists $c>0$ such that $\Phi(u)\ge
c\|\nabla u\|_{L^p(\Omega)}^p$ for every $u\in W^{1,p}(\Omega)$ if $\|u\|_{W_0^{1,p}(\Omega)}$ is small enough.
\end{lema}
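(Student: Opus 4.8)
The plan is to show that near the origin the two ``concave'' terms of $\Phi$ are negligible compared with $\frac1p\|\nabla u\|_{L^p(\Omega)}^p$. Since, by Poincar\'e's inequality, $\|\nabla\cdot\|_{L^p(\Omega)}$ is an equivalent norm on $W_0^{1,p}(\Omega)$, it suffices to produce a threshold $\rho>0$ and a constant $c>0$ so that the asserted bound holds whenever $\|\nabla u\|_{L^p(\Omega)}\le\rho$.

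First I would estimate $\int_\Omega F(x,u)\,dx$. Fixing any exponent $q\in(p,p^*)$, the chain of inequalities in (F2) gives $0\le\int_\Omega F(x,u)\,dx\le \frac{1}{k_2}\int_\Omega f(x,u)u\,dx\le \frac{c_4}{k_2}\|u\|_{L^q(\Omega)}^q$, hence $-\lambda\int_\Omega F(x,u)\,dx\ge -\frac{\lambda c_4}{k_2}\|u\|_{L^q(\Omega)}^q$. Next, since $\Omega$ is bounded and $p<q<p^*$, the Sobolev embedding together with Poincar\'e's inequality provides constants $S_q,S_{p^*}>0$, depending only on $N$, $p$, $q$ and $\Omega$, with $\|u\|_{L^q(\Omega)}\le S_q\|\nabla u\|_{L^p(\Omega)}$ and $\|u\|_{L^{p^*}(\Omega)}\le S_{p^*}\|\nabla u\|_{L^p(\Omega)}$.

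Plugging these two bounds into the definition \eqref{Phi} of $\Phi$ I would obtain
$$
\Phi(u)\ge \|\nabla u\|_{L^p(\Omega)}^p\Big(\tfrac1p - \tfrac{S_{p^*}^{p^*}}{p^*}\|\nabla u\|_{L^p(\Omega)}^{p^*-p} - \tfrac{\lambda c_4 S_q^q}{k_2}\|\nabla u\|_{L^p(\Omega)}^{q-p}\Big).
$$
Because $p^*-p>0$ and $q-p>0$, the factor in parentheses tends to $1/p$ as $\|\nabla u\|_{L^p(\Omega)}\to 0$, so it exceeds $1/(2p)$ as soon as $\|\nabla u\|_{L^p(\Omega)}$ lies below a suitable $\rho>0$; this yields the claim with $c=1/(2p)$.

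There is no genuine obstacle here; the only points that need a little care are (i) reading off from the integral form of (F2) both the sign $\int_\Omega F(x,u)\,dx\ge 0$ and the upper bound in terms of $\|u\|_{L^q(\Omega)}^q$, and (ii) noting that the threshold $\rho$ (that is, how small $\|u\|_{W_0^{1,p}(\Omega)}$ must be) depends on $\lambda$ through the coefficient $\lambda c_4 S_q^q/k_2$, which is harmless since $\lambda$ is fixed.
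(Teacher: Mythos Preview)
Your argument is correct and follows essentially the same route as the paper: bound $\lambda\int_\Omega F(x,u)\,dx$ from above by $C\|u\|_{L^q(\Omega)}^q$ via (F2), control the $L^q$ and $L^{p^*}$ norms by $\|\nabla u\|_{L^p(\Omega)}$ through the Sobolev--Poincar\'e inequalities, and conclude from $p<q<p^*$ that the negative contributions are of higher order near the origin. Your write-up is in fact more explicit about the constants and the smallness threshold than the paper's own proof.
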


\begin{proof}
By (F3) and the Poincar\'e inequality we have
\begin{align*}
\Phi(u)&=\int_{\Omega}\frac{1}{p}|\nabla u|^{p}-\frac{1}{p^*}|u|^{p^*}-\lambda F(x,u)dx\\
&\geq\frac{1}{p}\|\nabla u\|^{p}_{L^{p}(\Omega)}-\frac{1}{p^*}\|u\|^{p^*}_{L^{p^*}(\Omega)}-C\|u\|^{q}_{L^{^q}(\Omega)}\\
&\geq\frac{1}{p}\|\nabla u\|^{p}_{L^{p}(\Omega)}- C_1(\|\nabla u\|^{p^*}_{L^{^p}(\Omega)}+\|\nabla u\|^{q}_{L^{^p}(\Omega)})\\
&\geq C\|\nabla u\|_{L^p(\Omega)},
\end{align*}
if $\|\nabla u\|_{L^p(\Omega)}$ is small enough, as $p<q<p^*$.
\end{proof}

The following lemma describes the properties of the manifolds $M_i$.

\begin{lema}\label{lema4}
$M_i$ is a $C^{1}$ sub-manifold of $W_0^{1,p}(\Omega)$ of co-dimension 1 $(i=1,2)$, 2 $(i=3)$ respectively. The sets $K_i$ are complete. Moreover, for every $u\in M_i$ we have the direct decomposition
$$
T_u W_0^{1,p}(\Omega) = T_u M_i \oplus \mbox{span}\{u_+, u_-\},
$$
where $T_u M$ is the tangent space at $u$ of the Banach manifold $M$. Finally, the projection onto the first component in this decomposition is uniformly continuous on bounded sets of $M_i$.
\end{lema}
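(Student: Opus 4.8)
\textbf{Proof proposal for Lemma \ref{lema4}.}

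The plan is to realize each $M_i$ as a regular level set of a $C^1$ submersion and then read off the stated properties. For $M_1$, define $\vp_1\colon W_0^{1,p}(\Omega)\to\R$ by $\vp_1(u)=\int_\Omega |\nabla u_+|^p - |u_+|^{p^*} - \lambda f(x,u)u_+\,dx$, and note $M_1=\{u\colon \int_\Omega u_+>0,\ \vp_1(u)=0\}$; similarly $\vp_2$ for $M_2$, and $M_3=\{u\colon \int_\Omega u_\pm>0,\ \vp_1(u)=\vp_2(u)=0\}$. First I would check that $\vp_1,\vp_2$ are $C^1$ on the open set $\{\int_\Omega u_+>0\}$ (resp.\ $u_-$, resp.\ the intersection): this uses (F1) (so that $f$ and $f_u$ give well-defined, continuous Nemytskii-type maps into the appropriate dual spaces) together with the growth bounds extractable from (F2) and the compactness of the Sobolev embedding $W_0^{1,p}\hookrightarrow L^q$, $p<q<p^*$; the term $\int |u_+|^{p^*}$ is $C^1$ by the standard argument for the critical functional. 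The key computation is that, for $u\in M_1$,
\begin{align*}
\langle \vp_1'(u), u_+\rangle &= p\int_\Omega |\nabla u_+|^p\,dx - p^*\int_\Omega |u_+|^{p^*}\,dx - \lambda\int_\Omega f_u(x,u)u_+^2 + f(x,u)u_+\,dx.
\end{align*}
On $M_1$ we may substitute $\int_\Omega |\nabla u_+|^p = \int_\Omega |u_+|^{p^*} + \lambda f(x,u)u_+$, so this equals $(p-p^*)\int_\Omega |u_+|^{p^*}\,dx + (p-1)\lambda\int_\Omega f(x,u)u_+\,dx - \lambda\int_\Omega f_u(x,u)u_+^2\,dx$. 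Using (F2) — namely $\int f(x,u)u \le c_1\int f_u(x,u)u^2$ with $c_1<1/(p^*-1)$, hence $(p-1)\int f(x,u)u - \int f_u(x,u)u^2 \le ((p-1)c_1^{-1}-1)\cdots$ — and the fact that $\int_\Omega |u_+|^{p^*}>0$ and $\int_\Omega f(x,u)u_+>0$ are bounded away from $0$ on $K_i$ by Lemma \ref{lema2} and Lemma \ref{lema1}, one gets $\langle\vp_1'(u),u_+\rangle \le -\delta<0$ with $\delta$ uniform on bounded subsets of $M_1$ (here one must be slightly careful to phrase the (F2) inequalities pointwise/locally rather than only in integrated form, or to restrict test functions to the support of $u_+$; since $f(x,0)=0$ the contributions vanish off $\{u_+>0\}$, so applying (F2) with $u$ replaced by $u_+$ is legitimate). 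In particular $\vp_1'(u)\neq 0$ on $M_1$, so $0$ is a regular value and $M_1$ is a $C^1$ submanifold of codimension $1$; the analogous statement for $\vp_2$ gives $M_2$, and for $M_3$ one checks that the differential $(\vp_1',\vp_2')\colon T_uW_0^{1,p}\to\R^2$ is onto, which follows because $\langle\vp_1'(u),u_+\rangle<0$, $\langle\vp_2'(u),u_-\rangle<0$, while $\langle\vp_1'(u),u_-\rangle = \langle\vp_2'(u),u_+\rangle = 0$ since $u_+,u_-$ have disjoint supports — so the $2\times 2$ matrix is diagonal with nonzero diagonal, hence invertible, giving codimension $2$.

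This same diagonal-invertibility is exactly what produces the direct sum decomposition: for $u\in M_i$, $T_uM_i = \ker d\Psi_u$ where $\Psi_u$ is the relevant submersion ($\vp_1$, $\vp_2$, or the pair), and since $d\Psi_u$ restricted to $\mathrm{span}\{u_+,u_-\}$ is an isomorphism onto $\R^{\mathrm{codim}}$, we get $T_uW_0^{1,p}(\Omega) = \ker d\Psi_u \oplus \mathrm{span}\{u_+,u_-\} = T_uM_i\oplus\mathrm{span}\{u_+,u_-\}$. (For $i=1,2$ the span is one-dimensional after noting $u_-$, resp.\ $u_+$, is controlled, or one simply keeps the full two-dimensional span and notes the component that matters; I would state it uniformly as $\mathrm{span}\{u_+,u_-\}$ as in the lemma, observing that on $K_1$ one has $u_-=0$ so the span is genuinely one-dimensional there, consistent with codimension $1$.) Completeness of $K_i$: $K_1=\{u\in M_1\colon u\ge0\}$ is closed in $W_0^{1,p}(\Omega)$ because $\{u\ge 0\}$ is closed and $M_1$ is closed — the latter since $\vp_1$ is continuous and, on the set $u\ge 0$, the constraint $\int u_+>0$ is equivalent to $u\not\equiv 0$, and a limit of functions in $K_1$ cannot be $0$ by Lemma \ref{lema2} ($\|\nabla u_+\|_{L^p}\ge c>0$). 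A closed subset of the complete space $W_0^{1,p}(\Omega)$ is complete. Same for $K_2$; for $K_3=M_3$ one uses that $M_3$ is closed away from $0$ and again Lemma \ref{lema2} keeps sequences bounded away from the origin.

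Finally, for the uniform continuity of the projection $P_u\colon T_uW_0^{1,p}(\Omega)\to \mathrm{span}\{u_+,u_-\}$ on bounded sets of $M_i$: writing $v\in T_uW_0^{1,p}$ as $v = w + \alpha u_+ + \beta u_-$ with $w\in T_uM_i$, applying $d\Psi_u$ and inverting the (diagonal) $2\times 2$ matrix gives $\alpha,\beta$ as explicit ratios of the quantities $\langle\vp_j'(u),v\rangle$ and $\langle\vp_j'(u),u_\pm\rangle$. The numerators depend continuously (indeed Lipschitz on bounded sets) on $u$ because $\vp_j'$ does — here the only delicate term is the critical one $\int |u_+|^{p^*-2}u_+\,\cdot$, for which continuity of $u\mapsto \vp_j'(u)$ on bounded sets of $W_0^{1,p}$ is standard — and the denominators are bounded away from $0$ uniformly by the estimate $\langle\vp_j'(u),u_\pm\rangle\le-\delta<0$ established above. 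Hence $u\mapsto P_u$ is continuous, uniformly on bounded subsets of $M_i$, and since $\|P_u\|$ is correspondingly bounded, the decomposition is "direct" in the quantitative (topological) sense needed later.

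\textbf{Main obstacle.} The crux is the uniform lower bound $\langle\vp_1'(u),u_+\rangle\le-\delta<0$ on bounded sets (equivalently, that $0$ is a regular value with quantitative control), since everything — submanifold structure, the splitting, completeness via non-degeneracy, and uniform continuity of the projection — rests on it. This is precisely where hypothesis (F2), and in particular the constraint $c_1<1/(p^*-1)$, is used in an essential way; getting the signs to line up requires care in how the integral inequalities in (F2) are applied to $u_+$ (using $f(x,0)=0$ to localize to $\{u_+>0\}$) and in combining them with the $M_1$-constraint and the lower bounds from Lemmas \ref{lema1}--\ref{lema2}. A secondary technical point is the $C^1$-regularity and continuity-on-bounded-sets of the map $u\mapsto\vp_j'(u)$ for the critical term $\int|u_+|^{p^*}$, which is classical but must be invoked correctly.
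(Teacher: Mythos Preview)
Your proposal is correct and follows essentially the same route as the paper: realize each $M_i$ as $\vp_i^{-1}(0)$ on the open set $\bar M_i$, verify $C^1$-regularity, compute $\langle\vp_1'(u),u_+\rangle$ and use the $M_1$-constraint together with (F2) (specifically $c_1<1/(p^*-1)$) plus Lemma~\ref{lema2} to get a strictly negative, uniformly bounded quantity, note $\langle\vp_1'(u),u_-\rangle=\langle\vp_2'(u),u_+\rangle=0$ to handle $M_3$, and read off the splitting and the explicit projection formula $\alpha=\langle\vp_1'(u),v\rangle/\langle\vp_1'(u),u_+\rangle$. The only cosmetic difference is that you substitute $\int|\nabla u_+|^p=\int|u_+|^{p^*}+\lambda\int f(x,u)u_+$ directly, whereas the paper first bounds $p\|\nabla u_+\|^p\le p^*\|\nabla u_+\|^p$ before using the constraint; both manipulations land on a negative multiple of $\|\nabla u_+\|_{L^p}^p$ and your version is arguably cleaner.
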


\begin{proof}
Let us denote
\begin{align*}
&\bar M_1 = \left\{u\in W^{1,p}_0(\Omega)\colon \int_\Omega u_+\, dx > 0
\right\},\\
&\bar M_2 = \left\{u\in W^{1,p}_0(\Omega)\colon \int_\Omega u_-\, dx > 0
\right\},\\
&\bar M_3 = \bar M_1 \cap \bar M_2.
\end{align*}
Observe that $M_i\subset \bar M_i$.

The set $\bar M_i$ is open in $W^{1,p}(\Omega)$,
therefore it is enough to prove that $M_i$ is a $C^1$ sub-manifold of $\bar M_i$. In order to do this, we will construct a $C^{1}$ function $\varphi_i:\bar M_i\to \R^d$ with $d=1\ (i=1,2)$, $d=2\ (i=3)$ respectively and
$M_i$ will be the inverse image of a regular value of $\varphi_i$.

In fact, we define: For $u\in \bar M_1,$
$$
\varphi_1(u)=\int_{\Omega}|\nabla u_+|^{p}-|u_+|^{p^*}-\lambda f(x,u) u_+ \,
dx.
$$
For $u\in \bar M_2,$
$$
\varphi_2(u)=\int_{\Omega}|\nabla u_-|^{p}-|u_-|^{p^*}-\lambda f(x,u) u_-\, dx.
$$
For $u\in \bar M_3,$
$$
\varphi_3(u)=(\varphi_1(u),\varphi_2(u)).
$$
Obviously, we have $M_i = \varphi_i^{-1}(0)$. From standard arguments (see \cite{DJM}, or the appendix of \cite{R}), $\varphi_i$ is of class $C^1$. Therefore, we only need to show that $0$ is a regular value for $\varphi_i$. To this end we compute, for $u\in M_1$,
\begin{align*}
\langle\nabla\varphi_1(u),u_+\rangle&= p \|\nabla u_+\|^p-p^*\|u_+\|^{p^*}-\lambda\int_\Omega f(x,u)u_+- f_u(x,u)u_+^2\,dx\\
&\leq p^*\left(\|\nabla u_+\|^p-\|u_+\|^{p^*}\right)-\lambda\int_\Omega f(x,u)u_+- f_u(x,u)u_+^2\,dx\\
&\leq (p^*\lambda-\lambda)\int_\Omega f(x,u)u_+\,dx-\int_\Omega
f_u(x,u)u_+^2\,dx.
\end{align*}
By (F3) the last term is bounded by
\begin{align*}
(p^*\lambda-\lambda-\frac{\lambda}{c_1})\int_\Omega f(x,u)u_+\,dx
&=\left(p^*-1-\frac{1}{c_1}\right)\left(\|\nabla u_+\|^p_{L^p(\Omega)}-\|u_+\|^{p^*}_{L^{p^*}(\Omega)}\right)\\
&\leq\left(p^*-1-\frac{1}{c_1}\right)\|\nabla u_+\|^p_{L^p(\Omega)}.
\end{align*}
Recall that $c_1<1/(p^*-1)$. Now, the last term is strictly negative by Lemma \ref{lema2}. Therefore, $M_1$ is a $C^{1}$ sub-manifold of $W^{1,p}(\Omega)$.
The exact same argument applies to $M_2$. Since trivially
$$
\langle \nabla \varphi_1(u), u_-\rangle = \langle \nabla \varphi_2(u),
u_+\rangle =0
$$
for $u\in M_3$, the same conclusion holds for $M_3$.

To see that $K_i$ is complete, let $u_k$ be a Cauchy sequence in $K_i$, then $u_k\to u$ in $W^{1,p}(\Omega)$. Moreover, $(u_k)_{\pm}\to u_{\pm}$ in $W^{1,p}(\Omega)$. Now it is easy to see, by Lemma \ref{lema2} and by continuity that $u\in K_i$.

Finally, by the first part of the proof we have the decomposition 
$$
T_u W^{1,p}(\Omega) = T_u M_i \oplus \mbox{span}\{u_+\}
$$
Where $M_1=\{u:\varphi_1(u)=0\}$ and $T_u M_1=\{v:\langle\nabla \varphi_1(u),v\rangle=0\}$. Now let $v\in T_u W_0^{1,p}(\Omega)$ be a unit tangential vector, then $v = v_1 + v_2$ where $v_2=\alpha u_+$ and $v_1=v-v_2$. Let us take $\alpha$ as
$$
\alpha = \frac{\langle \nabla\varphi_1(u),v\rangle}{\langle
\nabla\varphi_1(u),u_+\rangle}.
$$
With this choice, we have that $v_1\in T_u M_1$. Now
$$
\langle\varphi_1(u),v_1\rangle=0.
$$
The very same argument to show that $T_uW^{1,p}(\Omega) = T_u M_2\oplus\langle u_-\rangle$ and $T_uW^{1,p}(\Omega) = T_u M_3\oplus \langle u_+,u_-\rangle$.

From these formulas and from the estimates given in the first part of the proof, the uniform continuity of the projections onto $T_u M_i$ follows.
\end{proof}

Now, we need to check the Palais-Smale condition for the functional $\Phi$ restricted to the manifold $M_i$. To this end, we need the following lemma from \cite{GAP} which proves the Palais-Smale condition for the unrestricted functional below certain energy level.

\begin{lema}[J. Garc\'{\i}a-Azorero, I. Peral, \cite{GAP}]\label{gap}
Let $S_p$ be the best Sobolev constant
$$
S_p := \inf_{\phi\in C^{\infty}_c(\R^N)} \frac{\int_{\R^N} |\nabla \phi|^p\,
dx}{\left(\int_{\R^N} |\phi|^{p^*}\, dx\right)^{p/p^*}}.
$$
Then, the unrestricted functional $\Phi$ verifies the Palais-Smale condition for energy level $c$ for every $c<\frac{1}{N}S_p^{N/p}$. \label{PSc}
\end{lema}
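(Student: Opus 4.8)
The statement is classical and essentially due to Garc\'ia-Azorero and Peral \cite{GAP}; we indicate the main steps. Let $(u_n)\subset W_0^{1,p}(\Omega)$ be a Palais--Smale sequence for $\Phi$ at level $c<\frac1N S_p^{N/p}$, i.e. $\Phi(u_n)\to c$ and $\Phi'(u_n)\to 0$ in $W^{-1,p'}(\Omega)$.

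\emph{Boundedness.} Combining $\Phi(u_n)$ with $\langle\Phi'(u_n),u_n\rangle$ and using the Ambrosetti--Rabinowitz type inequality $k_2\int_\Omega F(x,u_n)\,dx\le\int_\Omega f(x,u_n)u_n\,dx$ from (F2) (recall $p<k_2<p^*$, and note $\int_\Omega F(x,u_n)\,dx\ge 0$ also by (F2)), one gets
$$
\left(\frac1p-\frac1{k_2}\right)\|\nabla u_n\|_{L^p(\Omega)}^p\le\Phi(u_n)-\frac1{k_2}\langle\Phi'(u_n),u_n\rangle\le C+o(1)\,\|\nabla u_n\|_{L^p(\Omega)},
$$
so $(u_n)$ is bounded in $W_0^{1,p}(\Omega)$. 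Passing to a subsequence, $u_n\rightharpoonup u$ in $W_0^{1,p}(\Omega)$, $u_n\to u$ in $L^r(\Omega)$ for every $r<p^*$ and a.e., and $u$ is a critical point of $\Phi$.

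\emph{Concentration--compactness.} Applying P.L. Lions' concentration compactness lemma to the bounded sequences $|\nabla u_n|^p$ and $|u_n|^{p^*}$ (after extension by zero to $\R^N$), there exist an at most countable set $\{x_j\}_{j\in J}\subset\overline\Omega$ and nonnegative numbers $\mu_j,\nu_j$ with $|\nabla u_n|^p\rightharpoonup\mu\ge|\nabla u|^p+\sum_{j}\mu_j\delta_{x_j}$ and $|u_n|^{p^*}\rightharpoonup\nu=|u|^{p^*}+\sum_{j}\nu_j\delta_{x_j}$ weakly-$*$ as measures, and $\mu_j\ge S_p\,\nu_j^{p/p^*}$ for all $j$. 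Fixing $j$ and a cut-off $\phi_\varepsilon$ concentrated near $x_j$ with $\phi_\varepsilon(x_j)=1$, testing $\Phi'(u_n)$ against $u_n\phi_\varepsilon$ and letting $n\to\infty$ and then $\varepsilon\to0$, the term coming from $f$ vanishes (since $u_n\to u$ in $L^q(\Omega)$ and (F2) controls the growth of $f$), giving $\mu_j\le\nu_j$; together with $\mu_j\ge S_p\nu_j^{p/p^*}$ this forces $\nu_j=0$ or $\nu_j\ge S_p^{N/p}$ for each $j$. If $\nu_{j_0}\ge S_p^{N/p}$ for some $j_0$, then writing $c=\lim_n\big[\Phi(u_n)-\frac1{k_2}\langle\Phi'(u_n),u_n\rangle\big]$ and discarding the nonnegative contributions of $f$ as above, $c\ge\big(\frac1p-\frac1{p^*}\big)\nu_{j_0}=\frac1N\nu_{j_0}\ge\frac1N S_p^{N/p}$, contradicting the hypothesis. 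Hence $\nu_j=0$ for all $j$ and $u_n\to u$ in $L^{p^*}(\Omega)$.

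\emph{Strong convergence.} From $\langle\Phi'(u_n),u_n-u\rangle\to0$, $\langle\Phi'(u),u_n-u\rangle\to0$, together with the now-established $L^{p^*}$ convergence and the $L^q$ convergence, the critical and subcritical terms contribute $o(1)$, so
$$
\int_\Omega\big(|\nabla u_n|^{p-2}\nabla u_n-|\nabla u|^{p-2}\nabla u\big)\cdot\nabla(u_n-u)\,dx\to0.
$$
By the classical monotonicity (Simon) inequalities for $\xi\mapsto|\xi|^{p-2}\xi$ this yields $\nabla u_n\to\nabla u$ in $L^p(\Omega)$, i.e. $u_n\to u$ in $W_0^{1,p}(\Omega)$, which is the Palais--Smale condition at level $c$. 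The only delicate point is ruling out concentration of $L^{p^*}$-mass at the points $x_j$: this is exactly where the strict threshold $c<\frac1N S_p^{N/p}$ is used; all remaining steps are routine.
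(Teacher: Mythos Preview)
Your sketch is correct and follows the standard route (boundedness via the Ambrosetti--Rabinowitz inequality in (F2), Lions' concentration--compactness to exclude Dirac masses under the threshold $c<\frac1N S_p^{N/p}$, and strong convergence via the monotonicity of the $p$-Laplacian). The paper itself does not prove this lemma: it attributes the result to Garc\'ia-Azorero and Peral and only remarks that the proof is based on the concentration compactness method, so there is nothing further to compare. One small point: in the step where you derive $c\ge\big(\tfrac1p-\tfrac1{p^*}\big)\nu_{j_0}$ from $\Phi(u_n)-\tfrac1{k_2}\langle\Phi'(u_n),u_n\rangle$, the combination with $\tfrac1{k_2}$ actually yields $c\ge\big(\tfrac1p-\tfrac1{k_2}\big)\mu_{j_0}+\big(\tfrac1{k_2}-\tfrac1{p^*}\big)\nu_{j_0}$; since both $\mu_{j_0}$ and $\nu_{j_0}$ are $\ge S_p^{N/p}$ in the concentration alternative, this still gives $c\ge\frac1N S_p^{N/p}$, but it is cleaner to subtract $\tfrac1p\langle\Phi'(u_n),u_n\rangle$ instead, which produces $\big(\tfrac1p-\tfrac1{p^*}\big)\int|u_n|^{p^*}$ directly.
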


The proof of Lema \ref{gap} is based on the concentration compactness method.

Now, we can prove the Palais-Smale condition for the restricted functional.

\begin{lema}\label{lema5}
The functional $\Phi|_{K_i}$ satisfies the Palais-Smale condition for energy level $c$ for every $c<\frac{1}{N}S_p^{N/p}$.
\end{lema}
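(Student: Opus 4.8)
The plan is to show that any Palais--Smale sequence for the constrained functional is, after correcting by the constraint, a Palais--Smale sequence for the \emph{unrestricted} functional $\Phi$ at the same energy level, and then to invoke Lemma~\ref{gap}. So let $(u_k)\subset K_i$ be a Palais--Smale sequence at level $c<\frac1N S_p^{N/p}$, i.e.\ $\Phi(u_k)\to c$ and the derivative of $\Phi$ along $M_i$ at $u_k$ tends to $0$. The first step is boundedness: by Lemma~\ref{lema1} we have $\|\nabla u_k\|_{L^p(\Omega)}^p\le C_2\,\Phi(u_k)$, so $(u_k)$, and hence $(u_{k,\pm})$, is bounded in $W_0^{1,p}(\Omega)$.

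The second step removes the constraint. Using the decomposition $T_{u_k}W_0^{1,p}(\Omega)=T_{u_k}M_i\oplus\mathrm{span}\{u_{k,+},u_{k,-}\}$ together with the uniform continuity of the projection onto $T_{u_k}M_i$ established in Lemma~\ref{lema4}, the Lagrange multiplier rule produces real numbers $\mu_k$ (for $i=1,2$), respectively $\mu_k,\nu_k$ (for $i=3$), with
$$
\Phi'(u_k)=\mu_k\,\varphi_1'(u_k)+\nu_k\,\varphi_2'(u_k)+o(1)\qquad\text{in }W^{-1,p'}(\Omega),
$$
the $\nu_k$--term absent when $i=1$ and the $\mu_k$--term when $i=2$. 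Testing this identity against $u_{k,+}$ and using that $\langle\Phi'(u),u_+\rangle=\varphi_1(u)$ vanishes identically on $M_i$, while $\langle\varphi_2'(u),u_+\rangle=0$ on $M_3$, one gets $\mu_k\,\langle\varphi_1'(u_k),u_{k,+}\rangle=o(1)$, and symmetrically $\nu_k\,\langle\varphi_2'(u_k),u_{k,-}\rangle=o(1)$. By the computation in the proof of Lemma~\ref{lema4}, $\langle\varphi_1'(u_k),u_{k,+}\rangle\le(p^*-1-1/c_1)\,\|\nabla u_{k,+}\|_{L^p(\Omega)}^p$, which by Lemma~\ref{lema2} and $c_1<1/(p^*-1)$ stays bounded away from $0$; hence $\mu_k\to0$, and likewise $\nu_k\to0$. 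Since the $\varphi_i'(u_k)$ are bounded in $W^{-1,p'}(\Omega)$, we conclude $\Phi'(u_k)\to0$ in $W^{-1,p'}(\Omega)$.

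The third step is then immediate: $(u_k)$ is an unrestricted Palais--Smale sequence for $\Phi$ at level $c<\frac1N S_p^{N/p}$, so Lemma~\ref{gap} gives a subsequence with $u_k\to u$ strongly in $W_0^{1,p}(\Omega)$. Finally $u_{k,\pm}\to u_\pm$ in $W^{1,p}(\Omega)$, so the sign conditions and the equalities $\varphi_i(u)=\lim_k\varphi_i(u_k)=0$ pass to the limit, while $\|\nabla u_\pm\|_{L^p(\Omega)}\ge c>0$ by Lemma~\ref{lema2} keeps $u$ nontrivial; equivalently, $K_i$ is complete by Lemma~\ref{lema4}. Hence $u\in K_i$, which is what we wanted.

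The delicate point is the second step: to force the multipliers to vanish one needs the pairings $\langle\varphi_i'(u_k),u_{k,\pm}\rangle$ to be bounded away from zero \emph{uniformly} along the whole sequence, and this is precisely what the uniform lower bound of Lemma~\ref{lema2} together with the structural restriction $c_1<1/(p^*-1)$ in (F2) are designed to supply; without such uniform control the constrained critical sequences could fail to be critical for $\Phi$, and Lemma~\ref{gap} could not be applied.
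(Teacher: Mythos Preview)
Your proof is correct and reaches the same conclusion as the paper---that a constrained Palais--Smale sequence is in fact an unrestricted one, so Lemma~\ref{gap} applies---but the route is slightly different. You pass through Lagrange multipliers: write $\Phi'(u_k)=\mu_k\varphi_1'(u_k)+\nu_k\varphi_2'(u_k)+o(1)$, then test against $u_{k,\pm}$ and use the negative upper bound on $\langle\varphi_i'(u_k),u_{k,\pm}\rangle$ from Lemma~\ref{lema4} together with Lemma~\ref{lema2} to force $\mu_k,\nu_k\to0$. The paper instead exploits directly the Nehari--type identity $\langle\Phi'(u),u_\pm\rangle=\varphi_i(u)=0$ on $M_i$: taking a norming unit vector $v_j$ for $\Phi'(u_j)$ and decomposing $v_j=w_j+z_j$ with $w_j\in T_{u_j}M_i$ and $z_j\in\mathrm{span}\{u_{j,+},u_{j,-}\}$, one has $\langle\Phi'(u_j),z_j\rangle=0$ automatically, while $\langle\Phi'(u_j),w_j\rangle=\langle\nabla\Phi|_{K_i}(u_j),w_j\rangle\to0$ because $w_j$ stays bounded (this is where the uniform continuity of the projection from Lemma~\ref{lema4} enters). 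Thus the paper never needs to introduce or kill multipliers. Both arguments rest on the same ingredients---Lemma~\ref{lema1} for boundedness, and the quantitative transversality in Lemma~\ref{lema4} powered by Lemma~\ref{lema2}---but the paper's packaging is a bit shorter, while yours makes the role of the structural constant $c_1<1/(p^*-1)$ more explicit.
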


\begin{proof}
Let $\{u_k\}\subset K_i$ be a Palais-Smale sequence, that is $\Phi(u_k)$ is uniformly bounded and $\nabla \Phi|_{K_i}(u_k)\to 0$ strongly. We need to show that there exists a subsequence $u_{k_j}$ that converges strongly in $K_i$.

Let $v_j\in T_{u_j}W_0^{1,p}(\Omega)$ be a unit tangential vector such that
$$
\langle \nabla \Phi(u_j), v_j\rangle = \|\nabla
\Phi(u_j)\|_{W^{-1,p'}(\Omega)}.
$$
Now, by Lemma \ref{lema4}, $v_j = w_j + z_j$ with $w_j\in
T_{u_j}M_i$ and $z_j\in \mbox{span}\{(u_j)_+, (u_j)_-\}$.

Since $\Phi(u_j)$ is uniformly bounded, by Lemma \ref{lema1}, $u_j$ is uniformly bounded in $W_0^{1,p}(\Omega)$ and hence $w_j$ is uniformly bounded
in $W_0^{1,p}(\Omega)$. Therefore
$$
\|\nabla\Phi(u_j)\|_{W^{-1,p'}(\Omega)} = \langle \nabla \Phi(u_j), v_j\rangle
= \langle \nabla \Phi|_{K_i}(u_j), v_j\rangle\to 0.
$$

As $w_j$ is uniformly bounded and $\nabla \Phi|_{K_i}(u_k)\to 0$ strongly, the inequality converges strongly to 0. Now the result follows by Lema \ref{gap}.
\end{proof}

We now immediately obtain

\begin{lema}\label{lema6}
Let $u\in K_i$ be a critical point of the restricted functional $\Phi|_{K_i}$. Then $u$ is also a critical point of the unrestricted functional $\Phi$ and hence a weak solution to \eqref{1.1}.
\end{lema}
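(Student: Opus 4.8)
The plan is to use the Lagrange multiplier rule for constrained critical points on the $C^1$ manifold $M_i$, together with the structural information from Lemma \ref{lema4}. Since $K_i$ is a closed subset of $M_i$ (consisting of the sign constraint $u\ge0$, $u\le0$, or no extra constraint for $i=3$), and since by Lemma \ref{lema2} any $u\in K_i$ has $\|\nabla u_\pm\|_{L^p}$ bounded away from zero, a critical point $u$ of $\Phi|_{K_i}$ lies in the relative interior of $K_i$ inside $M_i$ — the sign constraint is not active because $u_+$ (resp.\ $u_-$) cannot vanish. Hence $u$ is an unconstrained critical point of $\Phi|_{M_i}$, and the Lagrange multiplier rule gives scalars $\mu_1$ (and $\mu_2$ for $i=3$) such that $\nabla\Phi(u) = \mu_1\nabla\varphi_1(u)$ (resp.\ $+\mu_2\nabla\varphi_2(u)$) in $W^{-1,p'}(\Omega)$. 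The goal is then to show all multipliers vanish, so that $\nabla\Phi(u)=0$ and $u$ solves \eqref{1.1}.

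First I would test the Lagrange identity against $u_+$ (and, for $i=3$, separately against $u_-$). Recall from the definition of $M_1$ that $\varphi_1(u)=0$ means precisely $\langle\nabla\Phi(u),u_+\rangle = \|\nabla u_+\|_p^p - \|u_+\|_{p^*}^{p^*} - \lambda\int_\Omega f(x,u)u_+\,dx = 0$ (using that $\Phi$ is computed so that $\langle\nabla\Phi(u),u_+\rangle$ equals exactly this expression — here one uses $\int|\nabla u|^{p-2}\nabla u\cdot\nabla u_+ = \int|\nabla u_+|^p$ and similarly for the other terms since $u_+,u_-$ have disjoint supports). Also, for $u\in M_3$ one has the mixed vanishing $\langle\nabla\varphi_1(u),u_-\rangle = \langle\nabla\varphi_2(u),u_+\rangle = 0$ noted in Lemma \ref{lema4}. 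Therefore, testing $\nabla\Phi(u)=\mu_1\nabla\varphi_1(u)+\mu_2\nabla\varphi_2(u)$ against $u_+$ yields $0 = \mu_1\langle\nabla\varphi_1(u),u_+\rangle$, and against $u_-$ yields $0 = \mu_2\langle\nabla\varphi_2(u),u_-\rangle$.

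Now the computation in the proof of Lemma \ref{lema4} is exactly what closes the argument: there it is shown that $\langle\nabla\varphi_1(u),u_+\rangle \le (p^*-1-1/c_1)\|\nabla u_+\|_p^p$, and since $c_1<1/(p^*-1)$ the coefficient $p^*-1-1/c_1$ is strictly negative, while $\|\nabla u_+\|_p^p \ge c^p>0$ by Lemma \ref{lema2}. Hence $\langle\nabla\varphi_1(u),u_+\rangle < 0$, strictly, and likewise $\langle\nabla\varphi_2(u),u_-\rangle<0$. From $\mu_1\langle\nabla\varphi_1(u),u_+\rangle=0$ we conclude $\mu_1=0$, and from $\mu_2\langle\nabla\varphi_2(u),u_-\rangle=0$ that $\mu_2=0$. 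Thus $\nabla\Phi(u)=0$ in $W^{-1,p'}(\Omega)$, i.e.\ $u$ is a weak solution of \eqref{1.1}. The cases $i=1,2$ are the same with only one multiplier present.

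I expect the main subtlety to be the justification that the sign constraint defining $K_i$ inside $M_i$ is inactive at $u$ — that is, that a critical point of $\Phi$ restricted to the subset $K_i\subset M_i$ is genuinely a critical point of $\Phi|_{M_i}$ rather than merely satisfying a variational inequality on the boundary of $K_i$. This is where Lemma \ref{lema2} does the real work: it forces $\|\nabla u_\pm\|_{L^p}\ge c>0$, so a $u\in K_1$ (say) automatically has a nontrivial positive part and, since the only way to sit on the topological boundary of $\{u\ge0\}$ within $M_1$ would be to have the negative part appear, one checks that near such $u$ the admissible variations in $K_1$ already span a neighborhood in $M_1$; equivalently, $u$ has a neighborhood in $K_i$ that is open in $M_i$. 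Once that point is granted, everything else is the Lagrange multiplier rule plus the sign computation already carried out in Lemma \ref{lema4}, so no further estimates are needed.
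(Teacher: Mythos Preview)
Your Lagrange–multiplier argument is exactly the natural route the paper's setup (Lemma~\ref{lema4}) is designed for; the paper itself gives no proof, writing only ``We now immediately obtain,'' so in substance you are supplying what the authors leave implicit. The core computation---testing against $u_\pm$, using $\langle\nabla\Phi(u),u_\pm\rangle=\varphi_j(u)=0$ on $M_i$, and invoking the strict sign $\langle\nabla\varphi_1(u),u_+\rangle<0$ from Lemma~\ref{lema4}---is correct and matches the paper's framework.

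One point deserves tightening. Your last paragraph argues that the sign constraint defining $K_1\subset M_1$ is ``inactive'' because Lemma~\ref{lema2} keeps $\|\nabla u_+\|_{L^p}$ bounded away from zero, and hence $u$ has a neighborhood in $K_1$ open in $M_1$. This is not right as stated: any nonnegative $u$ can be perturbed within $M_1$ to acquire a small negative part, so $K_1$ has empty relative interior in $M_1$. Lemma~\ref{lema2} guarantees $u_+\not\equiv0$, not that the cone $\{u\ge0\}$ is locally open. The cleaner way to handle $i=1,2$---and the one implicit in the paper's Lemma~\ref{lema5}---is to bypass the openness issue entirely: for $u\in K_1$ one has $u_-=0$, so the decomposition of Lemma~\ref{lema4} reads $T_uW_0^{1,p}(\Omega)=T_uM_1\oplus\mathrm{span}\{u_+\}$, and since $\langle\nabla\Phi(u),u_+\rangle=\varphi_1(u)=0$ automatically, vanishing of $\nabla\Phi(u)$ on $T_uM_1$ already forces $\nabla\Phi(u)=0$. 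No multiplier and no interior-point argument are needed. The paper is equally informal about what ``critical point of $\Phi|_{K_i}$'' means for $i=1,2$, so this is not a discrepancy with the paper but a place where both treatments are loose.
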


With all this preparatives, the proof of the Theorem follows easily.
\begin{proof}[\bf Proof of Theorem 1]
To prove the Theorem,  we need to check that the functional $\Phi\mid_{K_i}$ verifies the hypotheses of the Ekeland's Variational Principle \cite{Ekeland}.

The fact that $\Phi$ is bounded below over $K_i$ is a direct consequence of the construction of the manifold $K_i$.

Then, by Ekeland's Variational Principle, there existe $v_k\in K_i$ such that
$$
\Phi(v_k)\to c_i:=\inf_{K_i}\Phi\qquad \mbox{ and }\qquad
(\Phi\mid_{K_i})'(v_k)\to 0.
$$
We have to check that if we choose $\lambda$ large, we have that $c_i<\frac{1}{N}S_p^{N/p}$. This follows easily from Lemma \ref{tlambda}. For instance, for $c_1$, we have that choosing $w_0\geq 0$,
$$
c_1 \leq \Phi(t_\lambda w_0) \leq \frac{1}{p} t_{\lambda}^p \int_\Omega |\nabla w_0|^p
\, dx
$$ 
Hence $c_1 \to 0$ as $\lambda \to 0$. Moreover, it follows from the estimate of $t_\lambda$ in Lemma \ref{tlambda}, that $c_i<\frac{1}{N}S_p^{N/p}$ for $\lambda>\lambda^*(p,q,n,c_3)$. The other cases are similar.

From Lemma \ref{PSc}, it follows that $v_k$ has a convergent subsequence, that we still call $v_k$. Therefore $\Phi$ has a critical point in $K_i$, $i=1,2,3$
and, by construction, one of them is positive, other is negative and the last one changes sign.
\end{proof}

\end{document}